\newtheorem{theorem}{Theorem}[section]
\newtheorem{question}{Question}
\newtheorem{proposition}[theorem]{Proposition}
\newtheorem{corollary}[theorem]{Corollary}
\theoremstyle{definition}
\newtheorem{definition}[theorem]{Definition}
\theoremstyle{remark}
\newtheorem{remark}[theorem]{Remark}
\numberwithin{equation}{section}
\def\fnote#1{\footnote}
\def\ignora#1{}
\def\n3#1{\left\vert  \! \left\vert \! \left\vert \, #1 \, \right\vert \!
  \right\vert \! \right\vert }
\begin{document}

\title{ Octahedrality in Lipschitz free Banach spaces }

\author{Julio Becerra Guerrero, Gin{\'e}s L{\'o}pez-P{\'e}rez and Abraham Rueda Zoca}
\address{Universidad de Granada, Facultad de Ciencias.
Departamento de An\'{a}lisis Matem\'{a}tico, 18071-Granada
(Spain)} \email{juliobg@ugr.es, glopezp@ugr.es, arz0001@correo.ugr.es}

\subjclass[2010]{Primary 46B20; Secondary 46B22, 52A10.}

\keywords{Diameter two properties, octahedral norms, slices.}

\maketitle\markboth{J. Becerra, G. L\'{o}pez and A. Rueda}{Octahedrality in Lipschitz free Banach spaces.}

\begin{abstract}
The aim of this note is to study octahedrality in vector valued
Lipschitz-free Banach spaces on a metric space under topological
hypotheses on it by analysing the weak-star strong diameter two property in Lipschitz functions spaces. Also, we show an example which proves that our results are optimal and that octahedrality in vector-valued Lipschitz-free Banach spaces actually relies on the underlying metric space as well as on the Banach one.
\end{abstract}

\section{Introduction}

\bigskip

Lipschitz functions spaces (denoted by $Lip(M)$) and its preduals \cite{wea}, Lipschitz-free Banach spaces (denoted by $\mathcal F(M)$), have been recently studied under a topological point of view (e.g. \cite{dal}, \cite{gokal}, \cite{kal}). Geometrical properties in such spaces have been also considered, as Daugavet property. Indeed,  Daugavet property in Lipschitz functions spaces has been characterized in \cite{ikw} in terms of ``locality'' in the compact case and provides examples of metric spaces whose free-Lipschitz Banach space has an octahedral norm. On the other hand, in \cite{cdw} it has been recently proved that given $M$ an infinite metric space then the free space $\mathcal F(M)$ contains a complemented copy of $\ell_1$ and, consequently, $\mathcal F(M)$ has an equivalent norm which is an octahedral norm \cite{G}.

Motivated by this kind of results, the aim of this note is to go further and analyse octahedrality in vector-valued free-Lipschitz Banach spaces. Indeed, we introduce the Banach space  of vector-valued Lipschitz-free Banach space which, up the best of our knowledge, has not been previously considered and, in Section \ref{mainresults}, we prove in Theorem \ref{teocentral} that such spaces have an octahedral norm whenever their underlying metric space satisfies some topological assumptions such as being unbounded or not being uniformly discrete and a condition of existence of extension of vector-valued Lipschitz functions (see Definition \ref{defiCPE}). Consequently, such Banach spaces can not have any point of Fr\'echet differentiability. Moreover, we will exhibit an example of a metric space such that, depending on the underlying Banach space, the geometry of vector valued Lipschitz-free Banach space changes its behaviour from having a point of Fr\'echet differentiability to having an octahedral norm. This will have two important consequences: on the one hand, as there are vector-valued Lipschitz-free Banach spaces which contain points of Fr\'echet differentiability, we prove that our results on octahedrality are optimal; on the other hand, this proves that the geometry of vector-valued Lipschitz-free Banach spaces is determined by the underlying metric space as well as by the target Banach space. We will end by exhibiting some consequences of Theorem \ref{teocentral} and open problems in Section \ref{conseopenquestions}

We shall now introduce some notation. We will consider only real Banach spaces. Given $X$ a Banach space, $B_X$ (respectively $S_X$) stands for the closed unit ball (respectively the unit sphere) of $X$. Given a Banach space $X$, we will mean by a slice of $B_X$ a subset of the following form
$$S(B_X,f,\alpha):=\{x\in B_X: f(x)>1-\alpha\},$$
where $f\in S_{X^*}$ and $\alpha>0$. If $X$ is a dual space, say $X=Y^*$, by a weak-star slice of $B_{X^*}$ we will mean a slice $S(B_X,y,\alpha)$ where $y\in Y$.

Given $M$ a metric space with a designated origin $0$ and a Banach space $X$, we will denote by $Lip(M,X)$ the Banach space of all $X$-valued Lipschitz function on $M$ which vanish at $0$ under the standard Lipschitz norm
$$\Vert f\Vert:=\sup\left\{\left. \frac{\Vert f(x)-f(y)\Vert}{d(x,y)}\ \right/\ x,y\in M, x\neq y \right\} .$$
First of all, notice that we can consider every point of $M$ as an origin with no loss of generality. Indeed, given $x,y\in M$, let $Lip_x(M,X)$ ($Lip_y(M,X)$) be the space of $X$-valued Lipschitz functions which vanish at $x$ (respectively at $y$). Then the map
$$\begin{array}{ccc}
Lip_x(M,X) & \longrightarrow & Lip_y(M,X)\\
f & \longmapsto & f-f(y),
\end{array}
$$
defines an onto linear isometry. So the designated origin will be freely chosen.

From a straightforward application of Ascoli-Arzela theorem it can be checked that $B_{Lip(M,X^*)}$ is a compact set for the pointwise topology. Hence $Lip(M,X^*)$ is itself a dual Banach space. In fact, the map
$$\begin{array}{ccc}
\delta_{m,x}:Lip(M,X^*) & \longrightarrow & \mathbb R\\
f & \longmapsto & f(m)(x)
\end{array}$$
defines a linear and bounded map for each $m\in M$ and $x\in X$. In other words, $\delta_{m,x}\in Lip(M,X^*)^*$. Then if we define
$$\mathcal F(M,X):=\overline{span}(\{\delta_{m,x}\ /\ m\in M, x\in X\})$$
then we have that $\mathcal F(M,X)^*=Lip(M,X^*)$. Furthermore, a bounded net $\{f_s\}$ in $Lip(M,X^*)$ converges in the weak-star topology to a function $f\in Lip(M,X^*)$ if, and only if, $\{f_s(m)\}\rightarrow f(m)$ for each $m\in M$, where last convergence is in the weak-star topology of $X^*$. 

Note that given $f:M\longrightarrow X^*$ a Lipschitz map then there exists a linear operator $T_f:\mathcal F(M)\longrightarrow X^*$ such that $T_f\circ \delta_m=f(m)$ for each $m\in M$. This map

$$\begin{array}{ccc}
\Phi:Lip(M,X^*) & \longrightarrow & L(\mathcal F(M),X^*)\\
f & \longmapsto & T_f
\end{array}$$
is an isometric isomorphism (see e.g. \cite{jsv}). Now we have the following

\begin{proposition}\label{w*-w*continuidad}
$\Phi$ is $w^*-w^*$ continuous.
\end{proposition}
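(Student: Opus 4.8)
The plan is to invoke the standard criterion that a bounded linear operator $T\colon E^{*}\to F^{*}$ between dual Banach spaces (with designated preduals $E$ and $F$) is $w^{*}-w^{*}$ continuous if and only if it is the adjoint of a bounded operator $F\to E$; equivalently, if and only if for each $z\in F$ the functional $e^{*}\mapsto (Te^{*})(z)$ on $E^{*}$ lies in $E\subseteq E^{**}$. In our situation $E=\mathcal F(M,X)$ and $E^{*}=Lip(M,X^{*})$, while $F^{*}=L(\mathcal F(M),X^{*})$ is itself a dual space with predual $F=\mathcal F(M)\widehat{\otimes}_{\pi}X$ under the canonical trace duality $\langle T,\mu\otimes x\rangle=\langle T\mu,x\rangle$. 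Hence it suffices to prove that for every $z\in\mathcal F(M)\widehat{\otimes}_{\pi}X$ the functional $f\mapsto\langle\Phi(f),z\rangle$ on $Lip(M,X^{*})$ belongs to $\mathcal F(M,X)$.

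First I would reduce to elementary tensors of a special form. Since $\mathrm{span}\{\delta_{m}\colon m\in M\}$ is norm-dense in $\mathcal F(M)$, the linear span of $\{\delta_{m}\otimes x\colon m\in M,\ x\in X\}$ is norm-dense in $\mathcal F(M)\widehat{\otimes}_{\pi}X$. For a fixed $f$, the functional $z\mapsto\langle\Phi(f),z\rangle$ is norm-continuous in $z$, uniformly over $B_{Lip(M,X^{*})}$, because $\Phi$ is an isometry; therefore the set of those $z$ for which $f\mapsto\langle\Phi(f),z\rangle$ lies in the norm-closed subspace $\mathcal F(M,X)$ of $Lip(M,X^{*})^{*}$ is itself norm-closed. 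So it is enough to check it contains every generator $\delta_{m}\otimes x$.

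On these generators the computation is immediate: for $f\in Lip(M,X^{*})$, $m\in M$ and $x\in X$,
$$\langle\Phi(f),\delta_{m}\otimes x\rangle=\langle T_{f}(\delta_{m}),x\rangle=\langle f(m),x\rangle=f(m)(x)=\delta_{m,x}(f),$$
so $f\mapsto\langle\Phi(f),\delta_{m}\otimes x\rangle$ is precisely the element $\delta_{m,x}\in\mathcal F(M,X)$, and the argument is complete. Equivalently, this identifies $\Phi$ as the adjoint of the bounded operator $\mathcal F(M)\widehat{\otimes}_{\pi}X\to\mathcal F(M,X)$ determined on elementary tensors by $\delta_{m}\otimes x\mapsto\delta_{m,x}$ — well defined and bounded because $m\mapsto\delta_{m,x}$ is Lipschitz with constant $\Vert x\Vert$ and vanishes at $0$, hence linearizes through $\mathcal F(M)$, after which one invokes the universal property of the projective norm — and adjoint maps are automatically $w^{*}-w^{*}$ continuous. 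I do not expect a real obstacle: the only delicate point is to be explicit about which preduals induce the two weak-star topologies, namely $\mathcal F(M,X)$ on the domain and $\mathcal F(M)\widehat{\otimes}_{\pi}X$ on $L(\mathcal F(M),X^{*})$, so that in particular weak-star convergence of a bounded net $T_{f_{s}}\to T_{f}$ means $T_{f_{s}}\mu\to T_{f}\mu$ weak-star in $X^{*}$ for every $\mu\in\mathcal F(M)$, in accordance with the description of weak-star convergence in $Lip(M,X^{*})$ recalled before the statement.
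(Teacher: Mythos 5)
Your argument is correct, and it reaches the conclusion by a genuinely different (and shorter) route than the paper. Both proofs rest on the same starting point: $\Phi$ is $w^{*}-w^{*}$ continuous iff $z\circ\Phi$ is a $w^{*}$-continuous functional on $Lip(M,X^{*})$ for every $z\in\mathcal F(M)\widehat{\otimes}_\pi X$. The paper then verifies this via the Banach--Dieudonn\'e/Krein--\v{S}mulian criterion (\cite[Corollary 3.94]{fab}): it shows $Ker(z\circ\Phi)\cap B_{Lip(M,X^{*})}$ is $w^{*}$-closed by taking a bounded $w^{*}$-convergent net, expanding $z=\sum_n\gamma_n\otimes x_n$, approximating each $\gamma_n$ by elements of $span\{\delta_m\}$, and running an explicit $\varepsilon/3$ estimate that uses the bound $\Vert f-f_s\Vert\leq 2$. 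You instead use the cleaner characterization that a functional on $Lip(M,X^{*})$ is $w^{*}$-continuous iff it lies in the predual $\mathcal F(M,X)$, observe that $\{z: z\circ\Phi\in\mathcal F(M,X)\}$ is a norm-closed subspace because $z\mapsto z\circ\Phi$ is bounded and $\mathcal F(M,X)$ is closed in $Lip(M,X^{*})^{*}$, and reduce to the one-line identity $(\delta_m\otimes x)\circ\Phi=\delta_{m,x}$ on a dense set of generators. This trades the paper's hands-on net estimate for soft functional analysis, avoids Krein--\v{S}mulian entirely, and yields as a bonus the structural statement that $\Phi$ is the adjoint of the canonical quotient-type operator $\mathcal F(M)\widehat{\otimes}_\pi X\to\mathcal F(M,X)$, $\delta_m\otimes x\mapsto\delta_{m,x}$ (which is in fact the identification $\mathcal F(M,X)=\mathcal F(M)\widehat{\otimes}_\pi X$ invoked later in the paper). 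The paper's approach, while more laborious, has the minor virtue of working directly with the concrete description of $w^{*}$-convergence of bounded nets in $Lip(M,X^{*})$ without needing to name the predual of $L(\mathcal F(M),X^{*})$; but your careful identification of the two preduals is exactly the right thing to make explicit, and I see no gap.
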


\begin{proof} Note that $\Phi$ is $w^*-w^*$ continuous if, and only if, for every $z\in \mathcal F(M)\widehat{\otimes}_\pi X$ one has that $z\circ \Phi$ is a weak-star continuous functional. By \cite[Corollary 3.94]{fab} it is enough to prove that, given $z\in \mathcal F(M)\widehat{\otimes}_\pi X$, we have that $Ker(z\circ \Phi)\cap B_{Lip(M,X^*)}$ is weak-star closed. So, pick $z\in \mathcal F(M)\widehat{\otimes}_\pi X$ and consider $\{f_s\}$ a net in $Ker(z\circ \Phi)\cap B_{Lip(M,X^*)}$ which is weak-star convergent to $f$, and let us prove that $(z\circ \Phi)(f)=0$. To this aim, pick a positive number $\varepsilon>0$. Note that $z$ can be expressed as 
$$z:=\sum_{n=1}^\infty \gamma_n\otimes x_n$$
where $\gamma_n\in \mathcal F(M)$ and $x_n\in X$ verify that $\sum_{n=1}^\infty \Vert \gamma_n\Vert\Vert x_n\Vert<\infty$ \cite[Proposition 2.8]{rya}. Now, consider $\{\varepsilon_n\}$ a sequence in $\mathbb R^+$ such that $\sum_{n=1}^\infty \varepsilon_n<\frac{\varepsilon}{3}$ and consider, for each $n\in\mathbb N$, an element $\psi_n\in span\{\delta_m:m\in M\}$ verifying $\Vert \gamma_n-\psi_n\Vert\Vert x_n\Vert<\frac{\varepsilon_n}{2}$ for each $n\in\mathbb N$. As it is clear that $\sum_{n=1}^\infty \Vert \psi_n\Vert\Vert x_n\Vert<\infty$, consider $k\in\mathbb N$ such that $\sum_{n=k+1}^\infty \Vert \psi_n\Vert\Vert x_n\Vert<\frac{\varepsilon}{6}$. Finally, in view of weak-star topology of $Lip(M,X^*)$, it is obvious that $\{f_s(\psi_n)(x_n)\}\rightarrow f(\psi_n)(x_n)$ for each $n\in\mathbb N$, hence we can find $s$ such that $\vert (f-f_s)(\psi_n)(x_n)\vert<\frac{\varepsilon}{3k}$ for each $n\in\{1,\ldots, k\}$. Now, keeping in mind that $\Vert f-f_s\Vert\leq 2$, one has
$$\vert (z\circ \Phi)(f)\vert=\vert (z\circ \Phi)(f-f_s)\vert=\left \vert \sum_{n=1}^\infty T_{f-f_s}(\gamma_n)(x_n)\right\vert\leq$$
$$ \left\vert \sum_{n=1}^\infty T_{f-f_s}(\psi_n)(x_n)\right\vert+\Vert f-f_s\Vert\sum_{n=1}^\infty \Vert \gamma_n-\psi_n\Vert\Vert x_n\Vert\leq $$
$$ \sum_{n=1}^k \vert (f-f_s)(\psi_n)(x_n)\vert+\Vert f-f_s\Vert \sum_{n=k+1}^\infty \Vert \psi_n\Vert\Vert x_n\Vert+\frac{\varepsilon}{3}$$
$$<\sum_{n=1}^k \frac{\varepsilon}{3k}+\frac{2\varepsilon}{3}=\varepsilon.$$
As $\varepsilon>0$ was arbitrary we conclude that $(z\circ \Phi)(f)=0$, so we are done.
\end{proof}

The norm on a Banach space $X$ is said to be  octahedral if for every $\varepsilon>0$ and for every finite-dimensional subspace $M$ of $X$ there is some $y$ in the unit sphere of $X$ such that $$\Vert x+\lambda y\Vert\geq (1-\varepsilon)(\Vert x\Vert +\vert \lambda\vert)$$ holds for every $x\in M$ and for every scalar $\lambda$ (see \cite{dgz}).

We recall that a Banach space $X$ satisfies the slice diameter two property (respectively diameter two property, strong diameter two property) if every slice (respectively nonempty weakly open subset, convex combination of slices) of the closed unit ball has diameter two. If $X$ is itself a dual Banach space then weak-star slice diameter two property (respectively weak-star diameter two property and weak-star strong diameter two property) can be defined as usual, invoking weak-star slices (respectively nonempty weakly-star open subset, convex combination of weak-star slices) of the unit ball of $X$. These property, which are extremely opposite to Radon-Nikod\'ym property, have been deeply studied since last few years. For instance, it has been recently proved \cite{primeje, adv} that each one of the above properties is different from  the rest in an extreme way.

Banach spaces which satisfy some of the diameter two properties are infinite-dimensional uniform algebras \cite{nywe}, Banach spaces satisfying Daugavet property \cite{sh},  non-reflexive M-embedded spaces \cite{gines}, etc.

It is known that the norm on a Banach space $X$ is octahedral if, and only if, $X^*$ satisfies the weak-star strong diameter two propery \cite[Theorem 2.1]{blr}. It is also known that if a Banach space $X$ has an octahedral norm, then $X$ contains an isomorphic copy of $\ell_1$ \cite{G}.

Finally, given a Banach space $X$ and a point $x\in X$, we say that $x$ is a point of Fr\'echet differentiability if, for each $h\in X$, we have that
$$f'(x)(h):=\lim\limits_{t\rightarrow 0} \frac{\Vert x+th\Vert-\Vert x\Vert}{t},$$
  uniformly for  $h\in S_X$ and $f'(x):X\longrightarrow \mathbb R$ is a continuous and linear functional (see \cite{G}).

It is clear that a Banach space which has an octahedral norm does not have any point of Fr\'echet differentiability.

\section{Main results.}\label{mainresults}

\bigskip

Let $M$ be a metric space and $X$ a Banach space. Notice that we have a useful description of $\mathcal F(M,X)$ because we know a dense subspace of it. This fact will play an important role in the following because diameter two properties actually rely on dense subspaces in the following sense.

\begin{proposition}\label{densid2pdual}

Let $X$ be a Banach space. Let $Y\subseteq X^*$ a norm dense subspace. Then:

\begin{enumerate}
\item If for each $f\in S_Y$ and $ \alpha\in\mathbb R^+$ the slice $S(B_X,f,\alpha)$ has diameter two, then $X$ has the slice diameter two property.

\item If for each $f_1,\ldots, f_n\in S_Y$ and $ \alpha_1,\ldots, \alpha_n\in\mathbb R^+$ such that  $W:=\bigcap\limits_{i=1}^n S(B_X,f_i,\alpha_i)\neq \emptyset$ it follows that $W$ has diameter two, then $X$ has the diameter two property.

\item If for each $f_1,\ldots, f_n\in S_Y, \alpha_1,\ldots, \alpha_n\in\mathbb R^+$ and  $\lambda_1,\ldots, \lambda_n\in ]0,1]$ with $\sum_{i=1}^n \lambda_i=1$, the convex combination of slices\break $\sum_{i=1}^n \lambda_i S(B_X,f_i,\alpha_i)$ has diameter two, then $X$ satisfies the strong diameter two property
\end{enumerate}

\end{proposition}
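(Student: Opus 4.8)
The plan is to reduce every statement to its hypothesis by approximating, in norm, an arbitrary functional of $S_{X^*}$ by one lying in $S_Y$ and then comparing the associated slices. First I would record the elementary fact that $S_Y$ is norm dense in $S_{X^*}$: given $f\in S_{X^*}$, pick $g_0\in Y$ close to $f$ and normalise it, using that $\vert\,\Vert g_0\Vert-1\,\vert\le\Vert g_0-f\Vert$. From this I extract the key remark: if $f\in S_{X^*}$, $\alpha>0$ and $0<\beta<\alpha$, and $g\in S_Y$ satisfies $\Vert f-g\Vert<\alpha-\beta$, then every $x\in B_X$ with $g(x)>1-\beta$ also satisfies $f(x)=g(x)-(g-f)(x)>1-\beta-(\alpha-\beta)=1-\alpha$; hence $S(B_X,g,\beta)\subseteq S(B_X,f,\alpha)$. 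I would also note that any slice of $B_X$ cut by a norm-one functional with positive parameter is non-empty, since such a functional attains values arbitrarily close to $1$ on $B_X$, and that a subset of $B_X$ of diameter two forces every set lying between it and $B_X$ to have diameter two.

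With this in hand, part (1) is immediate: given $f\in S_{X^*}$ and $\alpha>0$, fix $\beta\in\,]0,\alpha[$ and $g\in S_Y$ with $\Vert f-g\Vert<\alpha-\beta$; then $S(B_X,g,\beta)$ is non-empty, has diameter two by hypothesis, and is contained in $S(B_X,f,\alpha)\subseteq B_X$, so the latter has diameter two. Part (3) goes the same way: given $f_1,\dots,f_n\in S_{X^*}$, $\alpha_i>0$ and $\lambda_i\in\,]0,1]$ with $\sum_i\lambda_i=1$, I choose for each $i$ some $g_i\in S_Y$ and $\beta_i\in\,]0,\alpha_i[$ with $S(B_X,g_i,\beta_i)\subseteq S(B_X,f_i,\alpha_i)$; then $\sum_i\lambda_i S(B_X,g_i,\beta_i)\subseteq\sum_i\lambda_i S(B_X,f_i,\alpha_i)$, the left-hand set is non-empty (each factor is) and has diameter two by hypothesis, while the right-hand set sits inside $B_X$, whence it too has diameter two.

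For part (2) one must additionally keep the perturbed intersection non-empty. So let $f_1,\dots,f_n\in S_{X^*}$, $\alpha_i>0$ with $W:=\bigcap_{i=1}^nS(B_X,f_i,\alpha_i)\neq\emptyset$, fix $x_0\in W$ and set $\eta_i:=f_i(x_0)-(1-\alpha_i)>0$. For each $i$ I would pick $g_i\in S_Y$ with $\Vert f_i-g_i\Vert<\delta_i$, where $0<\delta_i<\frac12\min\{\alpha_i,\eta_i\}$, and put $\beta_i:=\alpha_i-\delta_i\in\,]0,\alpha_i[$. The choice $\Vert f_i-g_i\Vert<\delta_i=\alpha_i-\beta_i$ gives $S(B_X,g_i,\beta_i)\subseteq S(B_X,f_i,\alpha_i)$ as above, hence $\bigcap_iS(B_X,g_i,\beta_i)\subseteq W$; and since $x_0\in B_X$, $g_i(x_0)=f_i(x_0)-(f_i-g_i)(x_0)>(1-\alpha_i+\eta_i)-\delta_i\ge 1-\alpha_i+\delta_i=1-\beta_i$ (using $\eta_i\ge 2\delta_i$), so $x_0\in\bigcap_iS(B_X,g_i,\beta_i)$ and this intersection is non-empty. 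By hypothesis it has diameter two; being contained in $W\subseteq B_X$, so does $W$.

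The only delicate point is this non-emptiness bookkeeping in part (2): the norm approximation of each $f_i$ must be taken finer than half the ``depth'' $\eta_i$ of the chosen base point $x_0$ inside the $i$-th slice, so that $x_0$ survives in all the perturbed slices and the hypothesis actually applies. Parts (1) and (3) avoid this entirely, because slices determined by norm-one functionals are automatically non-empty and no base point need be tracked.
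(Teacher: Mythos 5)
Your proof is correct and follows essentially the same route as the paper: approximate $f\in S_{X^*}$ by $\varphi\in S_Y$ within $\alpha-\beta$ (the paper takes $\beta=\alpha/2$) so that $S(B_X,\varphi,\beta)\subseteq S(B_X,f,\alpha)$, and transfer the diameter by monotonicity. The paper only writes out part (1) and declares (2) and (3) ``completely similar''; your explicit bookkeeping of the base point $x_0$ to keep the perturbed intersection non-empty in part (2) is exactly the detail that phrase sweeps under the rug, and it checks out.
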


\begin{proof}
We will prove statement (1), being the proof of (2) and (3) completely similar.

Pick $S:=S(B_X,f,\alpha)$ a slice of $B_X$. As $Y$ is norm dense in $X^*$ we can find $\varphi\in S_Y$ such that $\Vert f-\varphi\Vert<\frac{\alpha}{2}$.

By hypothesis, given an arbitrary $\delta\in\mathbb R^+$  we can find $x,y\in S(B_X,\varphi,\frac{\alpha}{2})$ such that $\Vert x-y\Vert>2-\delta$. Let us prove that $x\in S$, being the proof of $y\in S$ similar. Bearing in mind that $\varphi(x)>1-\frac{\alpha}{2}$ and that $\Vert f-\varphi\Vert<\frac{\alpha}{2}$ we deduce
$$f(x)=\varphi(x)+(f-\varphi)(x)\geq \varphi(x)-\Vert f-\varphi\Vert >1-\alpha.$$
On the other hand, as $x,y\in S$, we conclude
$$2-\delta<\Vert x-y\Vert\leq diam(S).$$
As $\delta\in\mathbb R^+$ was arbitrary we conclude that $X$ has the slice diameter two property, as desired\end{proof}

Now we consider the weak-star version of Proposition above.

\begin{proposition}\label{densid2ppredual}

Let $X$ be a Banach space and $Y\subseteq X$ be a dense subspace. Then:

\begin{enumerate}
\item If for each $y\in S_Y$ and $ \alpha\in\mathbb R^+$ the slice $S(B_{X^*},y,\alpha)$ has diameter two, then $X$ has the weak-star slice diameter two property.

\item If for each $y_1,\ldots, y_n\in S_Y$ and $ \alpha_1,\ldots, \alpha_n\in\mathbb R^+$ such that  $W:=\bigcap\limits_{i=1}^n S(B_{X^*},y_i,\alpha_i)\neq \emptyset$ one has that $W$ has diameter two, then $X$ has the weak-star diameter two property.

\item If for $y_1,\ldots, y_n\in S_Y, \alpha_1,\ldots, \alpha_n\in\mathbb R^+$ and $\lambda_1,\ldots, \lambda_n\in ]0,1]$ with  $\sum_{i=1}^n \lambda_i=1$ the convex combination of weak-star slices\break $\sum_{i=1}^n \lambda_i S(B_{X^*},y_i,\alpha_i)$ has diameter two, then $X$ satisfies the strong diameter two property
\end{enumerate}

\end{proposition}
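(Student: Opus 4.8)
The plan is to mimic, essentially verbatim, the proof of Proposition~\ref{densid2pdual}, exchanging the roles of $X$ and $X^*$: here the dense subspace lives in the predual $X$, the relevant slices are the weak-star slices $S(B_{X^*},y,\alpha)$ with $y\in S_X$, and the points of the ball being compared are functionals $\varphi\in B_{X^*}$. I would write out statement (1) in detail, observe that (3) follows by the same argument applied to a convex combination of weak-star slices, and treat (2) separately because of the one genuine subtlety described in the last paragraph.

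For (1), fix a weak-star slice $S(B_{X^*},x,\alpha)$ with $x\in S_X$. Using density of $Y$ in $X$ together with a routine normalization (pick $y'\in Y$ close to $x$ and replace it by $y'/\|y'\|$), choose $y\in S_Y$ with $\|x-y\|<\alpha/2$. Then for any $\varphi\in B_{X^*}$ with $\varphi(y)>1-\alpha/2$ one has
$$\varphi(x)=\varphi(y)+\varphi(x-y)\ge\varphi(y)-\|x-y\|>1-\alpha,$$
so $S(B_{X^*},y,\alpha/2)\subseteq S(B_{X^*},x,\alpha)$. Since $\sup_{B_{X^*}}\varphi(y)=\|y\|=1$, the slice $S(B_{X^*},y,\alpha/2)$ is nonempty, so the hypothesis produces, for each $\delta>0$, functionals $\varphi,\psi$ in it with $\|\varphi-\psi\|>2-\delta$; by the inclusion these lie in $S(B_{X^*},x,\alpha)$, and letting $\delta\to 0$ shows $S(B_{X^*},x,\alpha)$ has diameter two. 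Statement (3) is obtained the same way: replacing each $S(B_{X^*},y_i,\alpha_i/2)$ by a smaller weak-star slice associated to a nearby $y_i\in S_Y$ gives $\sum_{i=1}^n\lambda_i S(B_{X^*},y_i,\alpha_i/2)\subseteq\sum_{i=1}^n\lambda_i S(B_{X^*},x_i,\alpha_i)$, and the left-hand side is nonempty, so the hypothesis applies to it directly.

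The step I expect to need the only real care is (2), because an intersection of slices, unlike a single slice or a convex combination of slices, may be empty, so the approximating data must be chosen to keep the auxiliary intersection nonempty. To do this, fix $\varphi_0\in W=\bigcap_{i=1}^n S(B_{X^*},x_i,\alpha_i)$, set $\eta_i:=\varphi_0(x_i)-(1-\alpha_i)>0$, and choose $y_i\in S_Y$ with $\|x_i-y_i\|<\tfrac12\min\{\eta_i,\alpha_i\}$; then put $\beta_i:=\alpha_i-\|x_i-y_i\|\in(0,\alpha_i)$. The computation above still yields $S(B_{X^*},y_i,\beta_i)\subseteq S(B_{X^*},x_i,\alpha_i)$, while $\varphi_0(y_i)\ge\varphi_0(x_i)-\|x_i-y_i\|=1-\alpha_i+\eta_i-\|x_i-y_i\|>1-\beta_i$, so $\varphi_0\in\bigcap_{i=1}^n S(B_{X^*},y_i,\beta_i)$ and this intersection is nonempty. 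Applying the hypothesis to it and using the inclusions exactly as in (1) gives that $W$ has diameter two. Beyond this bookkeeping, the whole argument is a routine transcription of Proposition~\ref{densid2pdual}, so I anticipate no further difficulty.
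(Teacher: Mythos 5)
Your proof is correct and follows exactly the route the paper intends: the paper states this proposition without proof, as the weak-star transcription of Proposition~\ref{densid2pdual}, and your argument is precisely that transcription. Your extra care in part (2) — anchoring the approximating intersection at a fixed $\varphi_0\in W$ so that it is nonempty — is a detail the paper's ``completely similar'' glosses over, and it is handled correctly.
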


Now we need the following

\begin{definition}\label{defiCPE}
Let $M$ be a metric space and let $X$ be a Banach space.

We will say that the pair $(M,X)$ satisfies the contraction-extension property (CPE) if given $N\subseteq M$ and $f:N\longrightarrow X$ a Lipschitz function then there exists $F:M\longrightarrow X$ a Lipschitz function which extends to $f$ such that
$$\Vert F\Vert_{Lip(M,X)}=\Vert f\Vert_{Lip(N,X)}.$$
\end{definition}

On the one hand note that, in the particular case of being $M$ a Banach space, the definition given above agrees with the one given in \cite{beli}.

On the other hand, let us give some examples of pairs which  have the CPE. First of all, given $M$ a metric space, the pair $(M,\mathbb R)$ has the (CPE) \cite[Theorem 1.5.6]{wea}. In addition, in \cite[Chapter 2]{beli} we can find some examples of Banach spaces $X$ such that the pair $(X,X)$ satisfies the contraction extension property as Hilbert spaces and $\ell_\infty^n$. Finally, if $Y$ is a strictly convex Banach space such that there exists a Banach space $X$ with $dim(X)\geq 2$ and verifying that the pair $(X,Y)$ has the CPE, then $Y$ is a Hilbert space \cite[Theorem 2.11]{beli}.

Let us explain roughly the key idea of the main result, which proves, for every unbounded or not uniformly discrete metric space $M$, that the norm on $\mathcal F(M,X)$ is octahedral, whenever the pair $(M,X^*)$ has the CPE, where $X$ is a Banach space. For this, it is enough to show that every convex combination of $w^*-$slices C in the unit ball of $Lip(M,X^*)$ has diameter exactly $2$. What it is done first is to observe that it is enough to  consider $w^*$-slices given by elements in $span\{\delta_{m,x}\ /\ m\in M, x\in X\}$, which is based on Proposition \ref{densid2ppredual}. Now, depending on the kind of considered metric space, we construct a pair of Lipschitz functions for every $w^*-slice$ of $C$. Different pairs are defined on different finite metric subspaces so that each pair of these functions have norm preserving extensions to Lipschitz functions in a $w^*-$slice of $C$ from the CPE assumption, and we control the norm of each pair only on a finite metric space, so that the difference between the elements of every pair is also controlled. Now the estimates for the extensions are possible and in this way we get that $C$ has diameter $2$. Of course, there are details which depend on the kind of considered metric space, but the existence of the above unified idea motivated to us to show the following result in a joint way.    

\begin{theorem}\label{teocentral}
Let $M$ be an infinite pointed metric space and let $X$ be a Banach space. Assume that the pair $(M,X^*)$ has the CPE. If $M$ is unbounded or is not uniformly discrete then the norm on $\mathcal F(M,X)$ is octahedral. Consequently, the unit ball of $\mathcal F(M,X)$ can not have any point of Fr\'echet differentiability.
\end{theorem}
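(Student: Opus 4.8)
The plan is to use the characterization of octahedrality from \cite[Theorem 2.1]{blr}: since $\mathcal{F}(M,X)^{*}=Lip(M,X^{*})$, the norm on $\mathcal{F}(M,X)$ is octahedral exactly when $Lip(M,X^{*})$ has the weak-star strong diameter two property, i.e.\ when every convex combination $C=\sum_{i=1}^{n}\lambda_{i}S_{i}$ of weak-star slices $S_{i}=S(B_{Lip(M,X^{*})},\gamma_{i},\alpha_{i})$ of $B_{Lip(M,X^{*})}$ has diameter two. By Proposition~\ref{densid2ppredual} and the norm density of $\operatorname{span}\{\delta_{m,x}:m\in M,\ x\in X\}$ in $\mathcal{F}(M,X)$, I may assume that each $\gamma_{i}=\sum_{j}a_{ij}\delta_{m_{ij},x_{ij}}$ is a finite combination of evaluation functionals; I also assume $X\neq\{0\}$ (the statement being trivial otherwise) and fix $e\in S_{X^{*}}$. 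Let $N_{0}:=\{0\}\cup\{m_{ij}\}$, a finite subset of $M$ (necessarily with $|N_{0}|\ge 2$, since each $\gamma_{i}\neq 0$), pick $f_{i}\in S_{i}$, and set $\beta:=\min_{i}\bigl(\gamma_{i}(f_{i})-(1-\alpha_{i})\bigr)>0$. As $C\subseteq B_{Lip(M,X^{*})}$ forces $\operatorname{diam}(C)\le 2$, the task is, given $\varepsilon>0$, to exhibit $F,G\in C$ with $\lVert F-G\rVert\ge 2-\varepsilon$.

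I would treat both cases by the same scheme. Choose distinct points $p,q\in M$ (possibly $q\in N_{0}$) and a small $\varepsilon_{0}>0$, put $\ell:=d(p,q)$, and for each $i$ define $g_{i},h_{i}$ on $N_{0}\cup\{p,q\}$ to equal $f_{i}$ on $N_{0}\cup\{q\}$ and to satisfy $g_{i}(p)=f_{i}(q)+(1-\varepsilon_{0})\,\ell\,e$ and $h_{i}(p)=f_{i}(q)-(1-\varepsilon_{0})\,\ell\,e$. The crucial point is to choose $p,q$ so that $\lVert g_{i}\rVert$ and $\lVert h_{i}\rVert$, computed on $N_{0}\cup\{p,q\}$, are at most $1+\eta$ with $\eta=\eta(p,q)$ as small as we wish. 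Granting this, the CPE for $(M,X^{*})$ yields Lipschitz extensions $G_{i},H_{i}:M\to X^{*}$ of $g_{i},h_{i}$ with the same Lipschitz norm; then $\widetilde{G_{i}}:=G_{i}/(1+\eta)$ and $\widetilde{H_{i}}:=H_{i}/(1+\eta)$ belong to $B_{Lip(M,X^{*})}$, vanish at $0$ (because $0\in N_{0}$ and $f_{i}(0)=0$), and agree with $f_{i}/(1+\eta)$ on $N_{0}\supseteq\{m_{ij}\}$, so $\gamma_{i}(\widetilde{G_{i}})=\gamma_{i}(f_{i})/(1+\eta)>1-\alpha_{i}$ as soon as $\eta<\beta$; hence $\widetilde{G_{i}},\widetilde{H_{i}}\in S_{i}$. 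Setting $F:=\sum_{i}\lambda_{i}\widetilde{G_{i}}$ and $G:=\sum_{i}\lambda_{i}\widetilde{H_{i}}$ we get $F,G\in C$, and since $(F-G)(p)=\tfrac{2(1-\varepsilon_{0})\ell}{1+\eta}\,e$ and $(F-G)(q)=0$,
\[
\lVert F-G\rVert\ \ge\ \frac{\lVert (F-G)(p)-(F-G)(q)\rVert}{d(p,q)}\ =\ \frac{2(1-\varepsilon_{0})}{1+\eta}\ \ge\ 2-\varepsilon
\]
for $\varepsilon_{0},\eta$ small, which gives $\operatorname{diam}(C)=2$.

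It remains to produce $p,q$ in each case. If $M$ is unbounded, take $q:=0$ and $p$ with $d(p,0)>2\operatorname{diam}(N_{0})/\varepsilon_{0}$; then $f_{i}(q)=0$ and, for every $m\in N_{0}$, $\lVert g_{i}(p)-f_{i}(m)\rVert\le(1-\varepsilon_{0})d(p,0)+\operatorname{diam}(N_{0})\le d(p,0)-\operatorname{diam}(N_{0})\le d(p,m)$ (and likewise for $h_{i}$), so $\eta=0$. If $M$ is not uniformly discrete, fix a sequence of pairs $u_{k}\neq v_{k}$ with $d(u_{k},v_{k})\to 0$; passing to a subsequence, either $\inf_{k}d(u_{k},N_{0})>0$, in which case I put $\{p,q\}:=\{u_{k},v_{k}\}$ for large $k$ (keeping $g_{i}(v_{k})=f_{i}(v_{k})$), or $M$ has an accumulation point $m^{*}\in N_{0}$, in which case, choosing $w_{k}\to m^{*}$ with $w_{k}\neq m^{*}$, I put $\{p,q\}:=\{w_{k},m^{*}\}$ for large $k$. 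In both cases $\ell=d(p,q)\to 0$, and the bump at $p$ can only endanger the Lipschitz condition for pairs $(p,m)$ with $m\in N_{0}$, where $d(p,m)$ is bounded below — by $\inf_{k}d(u_{k},N_{0})$ in the first case, and by $\tfrac12\min\{d(a,b):a\neq b\in N_{0}\}$ for $m\neq m^{*}$ in the second (the pair $(p,m^{*})$ being harmless, as there the bump is measured against $f_{i}(m^{*})$ itself) — so $\eta=O(\ell)\to 0$. The last assertion of the theorem is then immediate, since a Banach space with octahedral norm has no point of Fr\'echet differentiability.

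The step I expect to be the genuine obstacle is the not-uniformly-discrete case: one must select a pair of nearby points for which implanting the bump perturbs $\lVert f_{i}\rVert$ only negligibly, and this is exactly what the above dichotomy (together with the observation that, in the awkward configuration, the remaining points of $N_{0}$ stay separated by the fixed positive gap $\min\{d(a,b):a\neq b\in N_{0}\}$) is meant to secure. A more routine but essential point is keeping the rescaled CPE-extensions inside the original slices $S_{i}$, which forces the use of the uniform strict slack $\beta>0$ and the requirement $\eta<\beta$.
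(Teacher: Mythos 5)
Your argument is correct and follows essentially the same path as the paper's proof: pass to the weak-star strong diameter two property of $Lip(M,X^*)$ via \cite[Theorem 2.1]{blr}, reduce to weak-star slices determined by finite combinations of the $\delta_{m,x}$, implant opposite bumps $\pm(1-\varepsilon_0)d(p,q)\,e$ at a well-chosen point of a finite subset containing the supports, extend by the CPE, rescale, and evaluate the difference at $p$ and $q$. The only differences are organizational: your dichotomy in the non-uniformly-discrete case (close pairs staying a fixed distance from $N_0$ versus an accumulation point inside $N_0$) replaces the paper's split into ``bounded, discrete but not uniformly discrete'' and ``bounded with $0\in M'$'', and your $(1-\varepsilon_0)$-damped bump plays the role of the paper's $(1+\delta)$-rescaling, but the constructions and estimates coincide.
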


\begin{proof} We will prove that $Lip(M,X^*)$ has the weak-star strong diameter two property, which is equivalent to the thesis of the Theorem. Let $C=\sum_{i=1}^k \lambda_i S(B_{Lip(M,X^*)},\varphi_i, \alpha)$ be a convex combination of weak-star slices in $Lip(M,X^*)$ and let us prove that $C$ has diameter exactly $2$.  From Proposition \ref{densid2pdual} we can assume that $\varphi_i\in span\{\delta_{m,x}\ /\ m\in M, x\in X\}$ for each $i\in\{1,\ldots, k\}$. So assume that
$$\varphi_i=\sum_{j=1}^{n_i}\lambda_j^i \delta_{m_{i,j},x_{i,j}},$$
for suitable $n_i\in\mathbb N$, $m_{i,j}\in M\setminus\{0\}, x_{i,j}\in X,  \lambda_j^i\in\mathbb R$ for $i\in\{1,\ldots, k\}, j\in\{1,\ldots, n_i\}$.

 Pick $g_i\in S(B_{Lip(M,X^*)},\varphi_i,\alpha)$ and $\delta_0\in\mathbb R^+$ verifying
$$0<\delta<\delta_0\Rightarrow \frac{\varphi_i(g_i)}{1+\delta}>1-\alpha\ \ \forall i\in\{1,\ldots, k\}.$$
Fix $0<\delta<\delta_0$.

Now, in a first step we will define for every $i\in\{1,\ldots ,k\}$ a subspace $M_i\subset M$ and functions $F_i$ and $G_i$ in $Lip(M_i,X^*)$. 
 
We will do this depending on following cases: $M$ is unbounded, $M$ is bounded, discrete but not uniformly discrete or $M$ is bounded and $0\in M'$. It is clear that each one of these cases holds whenever $M$ is unbounded or not uniformly discrete and that all together cover the assumption of the Theorem.

Assume that $M$ is unbounded. Then there exists $\{m_n\}\subseteq M$ verifying
$$\{d(m_n,0)\}\rightarrow\infty.$$
Hence
$$\{d(m_n,m)\}\rightarrow \infty$$
for each $m\in M$ in view of triangle inequality. Now pick an positive integer $N$ so that
\begin{equation}\label{condinatuepsilon}
 \frac{d(m_{i,j},0)}{d(m_N,m_{i,j})}+\frac{\Vert g_i(m_{i,j})\Vert}{d(m_N,m_{i,j})}<\delta\ \ \forall i\in\{1,\ldots, k\}, j\in\{1,\ldots, n_i\}.
\end{equation}
Choose $x^*\in S_{X^*}$ and define $M_i:=F:=\{0\}\cup\bigcup\limits_{i=1}^k \bigcup\limits_{j=1}^{n_i} \{m_{i,j}\}\cup \{m_N\}$ for every $1\leq i\leq k$. (In this case $M_i$ does not depend on $i$). Also, we define $F_i,G_i:M_i\longrightarrow X^*$ given by
$$F_i(m_{i,j})=G_i(m_{i,j})=
g_i(m_{i,j})\ \ i\in\{1,\ldots, k\}, j\in\{1,\ldots,n_i\},$$
$$F_i(0)=G_i(0)=0, F_i(m_N)=-G_i(m_N)=d(m_N,0)x^*. $$

Assume now that $M$ is bounded and discrete, but not uniformly discrete. As $M$ is discrete  we can find $r>0$ such that
$$B(0,r)=\{0\}, B(m_{i,j},r)=\{m_{i,j}\}\ \forall i\in\{1,\ldots, k\}, j\in\{1,\ldots, n_i\}.$$
Also, as $M$ is not uniformly discrete  we can find $\{x_n\},\{y_n\}$ a pair of sequences in $M$ such that $0<d(x_n,y_n)\rightarrow 0$. Pick  $n\in\mathbb N$ satisfying  $d(x_n,y_n)<\delta$ and so that
\begin{equation}\label{condiepsilon}
\frac{1+\frac{d(x_n,y_n)}{d(x_n,v)}}{1-\frac{d(x_n,y_n)}{d(x_n,v)}}<1+\delta\ \ \forall v\in\{m_{i,j}: 1\leq i\leq k, 1\leq j\leq n_i\}\cup\{0\}.
\end{equation}
Note that such an $n$ exists since $\{d(x_n,v)^{-1}\}$ is a well defined bounded sequence because $M$ is discrete and bounded in this case. Given $i\in\{1,\ldots, k\}$ and $x^*\in S_{X^*}$ define $M_i:=\{0\}\cup \bigcup\limits_{j=1}^{n_i}\{m_{i,j}\}\cup \{x_n,y_n\}$ and $F_i,G_i:=M_i\longrightarrow \mathbb R$ given by
$$F_i(0)=g_i(0)=0, F_i(m_{i,j})=G_i(m_{i,j})=g_i(m_{i,j})\ \forall j\in\{1,\ldots, n_i\},$$
and
$$ F_i(x_n)=G_i(x_n)=g_i(x_n), F_i(y_n)=g_i(x_n)+d(y_n,x_n)x^*,$$
$$ G_i(y_n)=g_i(x_n)-d(y_n,x_n)x^*.$$

Finally, we assume that $M$ is bounded and $0\in M'$. Then we can find $\{m_n\}$ a sequence in $M\setminus\{0\}$ such that $\{m_n\}\rightarrow 0$. So there exists a positive integer $m$ such that $m_n\notin \{m_{i,j}: i\in\{1,\ldots, k\}, j\in \{1,\ldots, n_i\}\}$ for every $n\geq m$. Now pick $x^*\in S_{X^*}$ and, for each $i\in \{1,\ldots, k\}$, we define $M_{i}:=\{0,m_n\}\bigcup \cup_{j=1}^{n_i}\{m_{i,j}\}$ and $F_i, G_i:M_{i}\longrightarrow X^*$ by the equations

$$F_i(m_{i,j})=G_i(m_{i,j})=g_i(m_{i,j})\ \ i\in\{1,\ldots, k\}, j\in\{1,\ldots, N_i\}$$

and

$$F_i(m_n)=-G_i(m_n)=d(m_n,0)x^*, F_i(0)=G_i(0)=0.$$

Now, for each unbounded or not uniformly discrete metric space $M$ we have defined the desired subspaces $M_i$ and functions $F_i$ and $G_i$ in $Lip(M_i,X^*)$ for every $1\leq i\leq k$.

For a second step we claim that $\Vert F_i\Vert_{Lip(M_i,X^*)}\leq 1+\delta$ for all $i\in\{1,\ldots, k\}$. For this we have three cases again:  $M$ is unbounded, $M$ is bounded, discrete but not uniformly discrete or $M$ is bounded and $0\in M'$. 

Assume that $M$ is unbounded. Given $u,v\in F, u\neq v$ and  $i\in\{1,\ldots, k\}$ we have two different possibilities:
\begin{enumerate}
\item[a)] If $u,v\notin\{m_N\}$ then
$$\frac{\Vert F_i(u)-F_i(v)\Vert}{d(u,v)}=\frac{\Vert g_i(u)-g_i(v)\Vert}{d(u,v)}\leq \Vert g_i\Vert\leq 1.$$
\item[b)] If $u=m_N$ then
$$\frac{\Vert F_i(u)-F_i(v)\Vert}{d(u,v)}=\frac{\Vert d(m_N,0)x^*-F_i(v)\Vert}{d(m_N,v)}\leq \frac{d(m_N,0)}{d(m_N,v)}+\frac{\Vert g_i(v)\Vert}{d(m_N,v)}\leq$$
$$\leq 1+\frac{d(v,0)}{d(m_N,v)}+\frac{\Vert g_i(v)\Vert}{d(m_N,v)}\mathop{<}\limits^{\mbox{
(\ref{condinatuepsilon})}}1+\delta.$$
\end{enumerate}
Now, taking supremun in $u$ and $v$, one has
$$\left\Vert F_i\right\Vert_{Lip(M_i,X^*)}\leq 1+\delta.$$

Assume now that $M$ is bounded, discrete but not uniformly discrete. Again given $u,v\in M_i, u\neq v$ and  $i\in\{1,\ldots, k\}$  we have different possibilities:\begin{enumerate}
\item[a)] If $u\neq y_n$ and $v\neq y_n$ then we have
$$\frac{\Vert F_i(u)-F_i(v)\Vert}{d(u,v)}=\frac{\Vert g_i(u)-g_i(v)\Vert}{d(u,v)}\leq \Vert g_i\Vert\leq 1.$$
\item[b)] If $u=y_n$, $v\neq x_n$ then
$$\frac{\Vert F_i(u)-F_i(v)\Vert}{d(u,v)}=\frac{\Vert g_i(x_n)+d(x_n,y_n)x^*-g_i(v)\Vert}{d(y_n,v)}\leq \frac{\Vert g_i(x_n)-g_i(v)\Vert+d(x_n,y_n)}{d(y_n,v)}\leq$$
$$<\frac{d(x_n,v)+d(x_n,y_n)}{d(x_{n},v)-d(y_n,x_n)}=\frac{1+\frac{d(x_n,y_n)}{d(x_n,v)}}{1-\frac{d(x_n,y_n)}{d(x_n,v)}}<1+\delta.$$
\item[c)] If $u=y_n$ and $v=x_n$ then
$$\frac{\Vert F_i(u)-G_i(v)\Vert}{d(u,v)}=\frac{d(x_n,y_n)\Vert x^*\Vert}{d(x_n,y_n)}=1$$
\end{enumerate}
Then, taking supremum in $u$ and $v$, one has
$$\Vert F_i\Vert_{Lip(M_i,X^*)}\leq 1+\delta.$$

If $M$ is bounded and $0\in M'$ we can get also that  $$\Vert F_i\Vert_{Lip(M_i,X^*)}\leq 1+\delta$$ using similar arguments to the ones of the above case taking $n$ large enough.

Similar computations also arise
$$\Vert G_i\Vert_{Lip(M_i,X^*)}\leq 1+\delta\ \forall i\in \{1,\ldots, k\}.$$

Now, we have defined subspaces $M_i\subset M$ and functions $F_i,\ G_i\in Lip(M_i,X^*)$ such that 
$$\max\limits_{1\leq i\leq k}\{\Vert F_i\Vert_{Lip(M_i,X^*)},\Vert G_i\Vert_{Lip(M_i,X^*)}\}\leq 1+\delta.$$ 
As the pair $(M,X^*)$ has the CPE then, for each $i\in\{1,\ldots, k\}$, we can find   an extension of $F_i$ and $G_i$ to the whole $M$ respectively, which we will call again $F_i$ and $G_i$, respectively, such that
$$\left\Vert F_i\right\Vert_{Lip(M,X^*)}\leq 1+\delta,  \left\Vert G_i\right\Vert_{Lip(M,X^*)}\leq 1+\delta.$$
So $\frac{F_i}{1+\delta}, \frac{G_i}{1+\delta}\in B_{Lip(M,X^*)}$ for each $i\in\{1,\ldots, k\}$. 

The final step of the proof is to see that $\sum_{i=1}^{k}\frac{F_i}{1+\delta}\in C$, $\sum_{i=1}^{k}\frac{G_i}{1+\delta}\in C$ and to conclude from here that $C$ has diameter 2. We prove this fact in the case $M$ is unbounded. For the other cases, the arguments and estimates are similar.
Then, assume $M$ is unbounded. Given $i\in\{1,\ldots, k\}$ one has
$$\varphi_i\left(\frac{F_i}{1+\delta}\right)=\frac{\sum_{j=1}^{n_i}
\lambda_j^i F_i(m_{i,j})(x_{i,j})}{1+\delta}=\frac{\sum_{j=1}^{n_i}
\lambda_j^i g_i(m_{i,j})(x_{i,j})}{1+\delta}=\frac{g_i(\varphi_i)}{1+\delta}>1-\alpha.$$
So $\sum_{i=1}^k \lambda_i \frac{F_i}{1+\delta}\in C$. Similarly one has $\sum_{i=1}^k \lambda_i \frac{G_i}{1+\delta}\in C$. Hence
$$diam(C)\geq \left\Vert \sum_{i=1}^k \lambda_i \frac{F_i}{1+\delta}-\sum_{i=1}^k \lambda_i \frac{G_i}{1+\delta} \right\Vert\geq \frac{\left\Vert \sum_{i=1}^k \lambda_i \frac{F_i(m_N)}{1+\delta}-\sum_{i=1}^k \lambda_i \frac{G_i(m_N)}{1+\delta} \right\Vert}{d(m_N,0)}$$
$$=\frac{\left\Vert \sum_{i=1}^k 2\lambda_i \frac{d(m_N,0)x^*}{1+\delta} \right\Vert}{d(m_N,0)}=\frac{2}{1+\delta}.$$
From the estimate above we deduce that $diam(C)=2$ from the arbitrariness of $0<\delta<\delta_0$.
\end{proof}

Now let us end the section by analysing the vector-valued Lipschitz-free Banach space over a concrete metric space. From here, we will get two interesting consequences: on the one hand, we will get several examples of vector-valued Lipschitz-free Banach spaces which not only fail to have an octahedral norm but also its unit ball contains points of Fr\'echet differentiability. On the other hand, we will prove that such construction depends strongly on the underlying target Banach space. So, octahedrality in vector-valued Lipschitz-free Banach spaces relies on the underlying metric spaces as well as on the target Banach one.

For the construction of such a metric space consider $\Gamma$ to be an infinite set. Define  $M:=\Gamma\cup\{0\}\cup \{z\}$. Consider on $M$ the following distance:
$$d(x,y):=\left\{\begin{array}{cc}
1 & \mbox{if }x,y\in \Gamma\cup\{0\}, x\neq y,\\
1 & \mbox{if }x=z, y\in \Gamma \mbox{ or } x\in \Gamma, y=z,\\
2 & \mbox{if }x=z, y=0\mbox{ or }x=0,y=z,\\
0 & \mbox{Otherwise.}
\end{array} \right.$$

This is obviously an infinite, bounded and uniformly discrete metric space. Moreover, it is not difficult to prove that the pair $(M,X)$ has the CPE for every Banach space $X$. Consider a Banach space $X$, pick $y\in S_X$ and notice that $\delta_{z,y}$ is a 2-norm functional, so define $\varphi:=\frac{\delta_{z,y}}{2}\in S_{\mathcal F(M,X)}$. Given $\alpha\in\mathbb R^+$ consider
$$S_\alpha:=S\left(B_{Lip(M,X^*)},\varphi,\frac{\alpha}{2}\right)=\{f\in B_{Lip(M,X^*)}\ /\ f(z)(y)>2-\alpha\}.$$

Consider $x\in \Gamma$ and $f\in S_\alpha$. We claim that 
$$f(x)(y)>1-\alpha.$$

Indeed, assume by contradiction that $f(x)(y)\leq 1-\alpha$. Then
$$1<f(z)(y)-f(x)(y)=(f(z)-f(x))(y)\leq \Vert f(z)-f(x)\Vert\leq d(z,x)=1,$$
a contradiction.

We will prove that $\inf_\alpha diam(S_\alpha)$ depends on the target space $X^*$.

\begin{proposition}\label{metriraropuntodife}
If $y$ is a point of Fr\'echet differentiability of $B_X$, then $\inf_\alpha S_\alpha=0$.
\end{proposition}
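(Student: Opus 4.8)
The plan is to prove the stronger fact that $diam(S_\alpha)\to 0$ as $\alpha\to 0^+$, which at once gives $\inf_\alpha diam(S_\alpha)=0$. The engine is the classical \v{S}mulyan lemma (see e.g. \cite{dgz}): since $y\in S_X$ is a point of Fr\'echet differentiability of the norm of $X$, one has
$$\lim\limits_{\delta\to 0^+}diam\left(S(B_{X^*},y,\delta)\right)=0.$$
Hence it is enough to dominate $diam(S_\alpha)$ by a fixed multiple of $diam\left(S(B_{X^*},y,\alpha)\right)$.

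To do this I would fix $\alpha\in\mathbb R^+$ and $f,g\in S_\alpha$ and inspect their values on $M=\Gamma\cup\{0\}\cup\{z\}$. For $x\in\Gamma$ we have $\Vert f(x)\Vert=\Vert f(x)-f(0)\Vert\leq d(x,0)=1$, while $f(x)(y)>1-\alpha$ by the claim proved just before this statement; hence $f(x),g(x)\in S(B_{X^*},y,\alpha)$. At the vertex $z$ we have $\Vert f(z)\Vert=\Vert f(z)-f(0)\Vert\leq d(z,0)=2$ and $f(z)(y)>2-\alpha$, so $\frac{1}{2}f(z),\frac{1}{2}g(z)\in S(B_{X^*},y,\frac{\alpha}{2})\subseteq S(B_{X^*},y,\alpha)$, and in particular $\Vert f(z)-g(z)\Vert\leq 2\,diam\left(S(B_{X^*},y,\alpha)\right)$. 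Writing $h:=f-g$ (so $h(0)=0$), I would then run through the finitely many types of pairs of distinct points $u,v\in M$ --- both in $\Gamma$; one in $\Gamma$ and the other equal to $0$; $z$ together with a point of $\Gamma$; and $z$ together with $0$ --- estimating $\Vert h(u)-h(v)\Vert/d(u,v)$ in each case via the triangle inequality and the memberships above (the relevant distances being $1$, $1$, $1$ and $2$, respectively). In every case the quotient is at most $3\,diam\left(S(B_{X^*},y,\alpha)\right)$, whence $\Vert f-g\Vert_{Lip(M,X^*)}\leq 3\,diam\left(S(B_{X^*},y,\alpha)\right)$.

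Taking the supremum over $f,g\in S_\alpha$ gives $diam(S_\alpha)\leq 3\,diam\left(S(B_{X^*},y,\alpha)\right)$; letting $\alpha\to 0^+$ and invoking \v{S}mulyan's lemma concludes the argument. The only step demanding a little care is the bookkeeping at the vertex $z$: there the Lipschitz bound only forces $\Vert f(z)\Vert\leq 2$ rather than $\leq 1$, which is exactly why one renormalises to $\frac{1}{2}f(z)$ and why the factor $\frac{\alpha}{2}$ appearing in the definition of $S_\alpha$ is precisely what makes the estimate close; everything else is a routine finite case check over the three kinds of distances occurring in $M$.
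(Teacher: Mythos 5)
Your argument is correct and follows essentially the same route as the paper: both proofs invoke \v{S}mulyan's lemma, use the Lipschitz bound together with the slice condition to place $f(x),g(x)$ (for $x\in\Gamma$) and $\tfrac12 f(z),\tfrac12 g(z)$ into small weak-star slices of $B_{X^*}$, and then bound $\Vert f-g\Vert_{Lip(M,X^*)}$ by a finite case check over the pairs of points of $M$. Your bookkeeping via $3\,diam\left(S(B_{X^*},y,\alpha)\right)$ is just a repackaging of the paper's estimate $\delta(\alpha)+\max\{\delta(\alpha),2\delta(\alpha/2)\}$, and the constants check out.
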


\begin{proof}
Notice that, as $y$ is a point of Fr\'echet differentiability, then there exists (Smulian lemma) $\delta:\mathbb R^+\longrightarrow \mathbb R^+$ such that $\delta(\varepsilon)
\mathop{\longrightarrow}\limits^{\varepsilon\rightarrow 0} 0$ and such that 
\begin{equation}\label{condifrechet}
\left. \begin{array}{c}
x^*,y^*\in B_{X^*}\\
x^*(y)>1-\alpha\\
y^*(y)>1-\alpha
\end{array} \right\}\Rightarrow \Vert x^*-y^*\Vert<\delta(\alpha).
\end{equation}
Pick $f,g\in S\left(B_{Lip(M,X^*)}, \varphi,\frac{\alpha}{2}\right)$ and  $u,v\in M\setminus\{0\}, u\neq v$. Our aim is to estimate
$$\frac{\Vert f(u)-g(u)-(f(v)-g(v))\Vert}{d(u,v)}\leq \Vert f(u)-g(u)-(f(v)-g(v))\Vert\leq$$
$$\leq \Vert f(u)-g(u)\Vert+\Vert f(v)-g(v)\Vert=:K.$$
If $u=z$ then we have
$$\frac{f(u)(y)}{2}>1-\frac{\alpha}{2},\frac{g(u)(y)}{2}>1-\frac{\alpha}{2}\mathop{\Longrightarrow}
\limits^{(\ref{condifrechet})}\Vert f(u)-g(u)\Vert\leq 2 \delta\left( \frac{\alpha}{2}\right).$$
Similarly, if $u\in \Gamma$ then
$$f(u)(y)>1-\alpha, g(u)(y)>1-\alpha\mathop{\Longrightarrow}
\limits^{(\ref{condifrechet})}\Vert f(u)-g(u)\Vert\leq \delta\left( \alpha\right).$$
Hence $K\leq \delta(\alpha)+\max\left\{\delta(\alpha),
2\delta\left(\frac{\alpha}{2}\right) \right\}$.

From the arbitrariness of $f,g\in S\left(B_{Lip(M)}, \varphi,\frac{\alpha}{2}\right)$ we conclude that 
$$diam\left(S\left(B_{Lip(M)}, \varphi,\frac{\alpha}{2}\right)\right)\leq \delta(\alpha)+\max\left\{\delta(\alpha),2
\delta\left(\frac{\alpha}{2}\right) \right\}.$$

Finally, taking infimum in $\alpha\in\mathbb R^+$, from the hypothesis on $\delta$ and the continuity of the map $\max$ we conclude the desired result.\end{proof}

Despite above Proposition, we will prove that $\mathcal F(M,X)$ has a dramatically different behaviour whenever $X^*$ has the weak-star slice diameter two property.

\begin{proposition}\label{metrirarodiam2}
If $X^*$ has the weak-star slice diameter two property, then $\inf_\alpha S_\alpha=2$.
\end{proposition}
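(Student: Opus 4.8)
The plan is to prove the stronger statement that $diam(S_\alpha)=2$ for \emph{every} $\alpha>0$, from which $\inf_\alpha diam(S_\alpha)=2$ follows immediately. Since $S_\alpha$ is a subset of $B_{Lip(M,X^*)}$ we always have $diam(S_\alpha)\le 2$, so the whole content is to produce, for each fixed $\alpha>0$ and each $\varepsilon>0$, a pair $f,g\in S_\alpha$ with $\Vert f-g\Vert>2-\varepsilon$.

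The key observation is that, since $y\in S_X$, the set $S(B_{X^*},y,\beta)=\{x^*\in B_{X^*}:x^*(y)>1-\beta\}$ is a \emph{nonempty} weak-star slice of $B_{X^*}$ for every $\beta>0$ (nonempty by Hahn--Banach, as $\Vert y\Vert=1$); hence, by the weak-star slice diameter two property of $X^*$, it has diameter $2$. So, given $\alpha>0$ and $\varepsilon>0$, put $\beta:=\alpha/2$ and choose $x^*,w^*\in B_{X^*}$ with $x^*(y)>1-\beta$, $w^*(y)>1-\beta$ and $\Vert x^*-w^*\Vert>2-\varepsilon$.

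Now I transport this pair into $Lip(M,X^*)$ by exploiting the specific distances of $M$. Define $f,g:M\longrightarrow X^*$ by $f(0)=g(0)=0$, $f(z)=2x^*$, $g(z)=2w^*$, and $f(v)=x^*$, $g(v)=w^*$ for all $v\in\Gamma$. A routine check of the four possible types of pairs of distinct points ($v,v'\in\Gamma$; $v\in\Gamma$, $v'=0$; $v=z$, $v'\in\Gamma$; $v=z$, $v'=0$) shows, using only $\Vert x^*\Vert,\Vert w^*\Vert\le 1$ together with $d(z,0)=2$ and $d(v,v')=d(v,0)=d(z,v)=1$ for $v,v'\in\Gamma$, that $\Vert f\Vert_{Lip(M,X^*)},\Vert g\Vert_{Lip(M,X^*)}\le 1$; and since $f(z)(y)=2x^*(y)>2-2\beta=2-\alpha$ and likewise $g(z)(y)>2-\alpha$, we get $f,g\in S_\alpha$. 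Finally,
$$\Vert f-g\Vert\ge\frac{\Vert f(z)-g(z)\Vert}{d(z,0)}=\frac{\Vert 2x^*-2w^*\Vert}{2}=\Vert x^*-w^*\Vert>2-\varepsilon .$$
Letting $\varepsilon\to 0$ gives $diam(S_\alpha)=2$, and since $\alpha>0$ was arbitrary we conclude $\inf_\alpha diam(S_\alpha)=2$.

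There is essentially no obstacle here: the only point requiring a little care is the Lipschitz verification, and in particular noticing that it is precisely the equality $d(z,0)=2$ that leaves room for the ``doubled'' value $2x^*$ at $z$ while keeping the slope from $z$ to the points of $\Gamma$ (where the function takes the value $x^*$) equal to $\Vert x^*\Vert\le 1$. Unlike Theorem \ref{teocentral}, no CPE hypothesis and no extension argument is needed, because $M$ is small and rigid enough that the relevant functions can be written down explicitly.
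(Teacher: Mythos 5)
Your proof is correct and is essentially the paper's own argument: your functions $f,g$ coincide pointwise with the paper's choice $f(t)=d(t,0)x^*$, $g(t)=d(t,0)y^*$, and the slice of $B_{X^*}$, the membership check in $S_\alpha$, and the final lower bound via evaluation at $z$ are all the same. The only difference is cosmetic (you spell out the Lipschitz verification case by case, which the paper asserts as clear).
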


\begin{proof}
Pick two arbitrary numbers $\alpha>0$ and $\varepsilon>0$. As $X^*$ has the weak-star slice diameter two property we can find $x^*,y^*\in S\left(B_{X^*},x,\frac{\alpha}{2}\right)$ such that $\Vert x^*-y^*\Vert>2-\varepsilon$.  Now define $f,g: M\longrightarrow X^*$ by the equations
$$f(t):=d(t,0)x^*\  \ \ g(t):=d(t,0)y^*\ \ \forall t\in M.$$
Now $f,g$ are clearly norm one Lipschitz functions. Moreover
$$\varphi(f)=\frac{f(z)(x)}{2}=x^*(x)>1-\frac{\alpha}{2}.$$
So $f\in S_\alpha$. Analogously $g\in S_\alpha$. Consequently
$$diam(S_\alpha)\geq \Vert f-g\Vert\geq \frac{\Vert f(z)-g(z)\Vert}{2}=\Vert x^*-y^*\Vert>2-\varepsilon.$$
As $\varepsilon$ and $\alpha$ were arbitrary we conclude that $diam(S_\alpha)=2$, so we are done.
\end{proof}

From two Propositions above we can get the desired consequences.  From Proposition \ref{metriraropuntodife} we get vector-valued Lipschitz-free Banach spaces with points of Fr\'echet differentiability which, keeping in mind that the pair $(M,X^*)$ has the (CPE) for every Banach space $X$, proves that Theorem \ref{teocentral} is optimal. However, from Proposition \ref{metrirarodiam2} we conclude that the existence of such Fr\'echet differentiability point depends on the target space. Indeed, we can even get octahedrality for suitable choices of $X$ in example above. For instance, $\mathcal F(M,\ell_1)=\mathcal F(M)\widehat{\otimes}_\pi \ell_1=\ell_1(\mathcal F(M))$ has an octahedral norm.

\section{Consequences and open questions.}\label{conseopenquestions}

Under the assumptions of Theorem \ref{teocentral} we have that $Lip(M,X^*)$ has the weak-star strong diameter two property. This arises a natural question.

\begin{question}
Let $M$ and $X$ under the hypothesis of Theorem \ref{teocentral}.
Does $Lip(M,X^*)$ satisfy the strong diameter two property?
\end{question}

Note that we have a partial answer in \cite{ikw} for the scalar case in terms of Daugavet property. Also, in the scalar case, when $M$ is a compact metric space such that $lip(M)$ separates the points in $M$, it is known that $lip(M)^*=\mathcal{F}(M)$ and $lip(M)$ is an M-embedded space (see Remark after Theorem 6.6 in \cite{kal}), that is $lip(M)$ is an M-ideal in $Lip(M)$ (see \cite{bw} for the case $M=[0,1]$).  Then we get from \cite{aln} that $lip(M)$ and  $Lip(M)$ satisfy the strong diameter two property. Recall that $lip(M)$ stands for the space of scalar Lipschitz functions on M such that
$$\lim_{\varepsilon\to 0}\sup\left\{\frac{\vert f(x)-f(y)\vert}{d(x,y)},\ x\neq y\in M,\ d(x,y)<\varepsilon\right\}=0.$$

Moreover, in \cite[Theorems 1 and 2]{ivakhno} the author get in the scalar case that $Lip(M)$ has the slice diameter two property whenever $M$ satisfy the same assumptions to the ones of Theorem \ref{teocentral}.

In \cite{cdw} it has been recently proved that $\mathcal F(M)$ contains an isomorphic copy of $\ell_1$ whenever $M$ is an infinite metric space. However, this fact is an easy consequence of Theorem \ref{teocentral}.

\begin{corollary}
Let $M$ be an infinite metric space with a designated origin $0$. Then $\mathcal F(M)$ contains an isomorphic copy of $\ell_1$.
\end{corollary}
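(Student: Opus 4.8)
The plan is to reduce the statement to Theorem \ref{teocentral} in the ``large'' cases and to handle the one remaining case by a short self-contained argument. Concretely, I would distinguish whether $M$ is bounded and uniformly discrete or not.

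\emph{Case 1: $M$ is unbounded or not uniformly discrete.} Here I would simply apply Theorem \ref{teocentral} with $X=\mathbb R$. Since the pair $(M,\mathbb R)$ has the CPE by \cite[Theorem 1.5.6]{wea}, and since $\mathcal F(M,\mathbb R)=\mathcal F(M)$ (with $Lip(M,\mathbb R)=Lip(M)$ as predual), the theorem gives that the norm of $\mathcal F(M)$ is octahedral. It then only remains to invoke the cited fact that a Banach space with an octahedral norm contains an isomorphic copy of $\ell_1$ \cite{G}, which finishes this case.

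\emph{Case 2: $M$ is bounded and uniformly discrete.} This situation is not covered by Theorem \ref{teocentral}, so here I would argue directly. Fix $\theta>0$ with $d(x,y)\ge\theta$ for all distinct $x,y\in M$ and put $D:=\sup_{m\in M}d(m,0)<\infty$ (finite since $M$ is bounded). Using that $M$ is infinite, pick a sequence $(m_n)$ of distinct points of $M\setminus\{0\}$ and set $e_n:=\delta_{m_n}/d(m_n,0)\in S_{\mathcal F(M)}$. The upper $\ell_1$-estimate $\Vert\sum_n a_n e_n\Vert\le\sum_n\abs{a_n}$ is immediate from $\Vert\delta_m\Vert=d(m,0)$ together with the triangle inequality. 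For the lower estimate, given finitely supported scalars $(a_n)$ I would test against the function $f\colon M\to\mathbb R$ defined by $f(m_n)=\tfrac{\theta}{2}\,\mathrm{sign}(a_n)$ and $f\equiv 0$ on the rest of $M$: then $f(0)=0$ and $\Vert f\Vert_{Lip(M)}\le 1$ because $\abs{f(x)-f(y)}\le\theta\le d(x,y)$ whenever $x\neq y$, while $\langle f,\sum_n a_n e_n\rangle=\tfrac{\theta}{2}\sum_n\frac{\abs{a_n}}{d(m_n,0)}\ge\tfrac{\theta}{2D}\sum_n\abs{a_n}$. Hence $(e_n)$ is equivalent to the unit vector basis of $\ell_1$, so $\mathcal F(M)$ contains an isomorphic copy of $\ell_1$.

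The only genuine point of the argument is the observation that Theorem \ref{teocentral} says nothing about bounded uniformly discrete spaces, so a short ad hoc argument must be supplied for that case; I do not expect any real obstacle there beyond bookkeeping the norming Lipschitz function, and in the other case the result is a one-line consequence of the theorem.
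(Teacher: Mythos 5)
Your proposal is correct, and the case split is exactly the one the paper makes: when $M$ is unbounded or not uniformly discrete you invoke Theorem \ref{teocentral} with $X=\mathbb R$ (using that $(M,\mathbb R)$ always has the CPE) together with Godefroy's result that octahedral norms force a copy of $\ell_1$, which is precisely the paper's first line. The difference is in the remaining case of a bounded, uniformly discrete $M$: the paper disposes of it by citing \cite[Proposition 5.1]{god}, whereas you give a short self-contained argument showing that the normalized evaluation functionals $e_n=\delta_{m_n}/d(m_n,0)$ are equivalent to the unit vector basis of $\ell_1$. Your argument is sound: the upper estimate is the triangle inequality, and your norming function $f$ with $|f|\le\theta/2$ is $1$-Lipschitz on a $\theta$-separated space and yields the lower bound $\frac{\theta}{2D}\sum_n|a_n|$ with $D=\sup_m d(m,0)<\infty$. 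This is essentially the standard proof that $\mathcal F(M)$ is isomorphic to $\ell_1(M\setminus\{0\})$ for bounded uniformly discrete $M$; what it buys is self-containedness (no external citation for that case), at the cost of a little bookkeeping that the paper outsources to Godard. Either route is acceptable; yours is arguably more transparent about why the bounded uniformly discrete case, which is genuinely outside the scope of Theorem \ref{teocentral} (and must be, in view of the example in Section \ref{mainresults}), still produces a copy of $\ell_1$ by a completely different mechanism.
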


\begin{proof}
The corollary follows whenever $M$ satisfies any of the assumptions of Theorem \ref{teocentral}.

In other case, the corollary follows from \cite[Proposition 5.1]{god}.
\end{proof}

\begin{remark} In \cite{cdw} is proved that $\mathcal{F}(M)$ contains a complemented copy of $\ell_1$ whenever $M$ is an infinite metric space. In fact, the results in \cite{cdw} give that $\mathcal{F}(M, X)$ contains too a complemented copy of $\ell_1$, for every Banach space $X$. Indeed, as $Lip(M)$ is isometrically isomorphic to a closed subspace of $Lip(M,X)$, then $Lip(M,X)$ contains an isomorphic copy of $\ell_{\infty}$. Finally, from \cite[Th. 4]{bepe}, we obtain that $\mathcal{F}(M, X)$ contains a complemented copy of $\ell_1$. We want to thank M. Doucha for noticing us about this fact. We would also want to thank M. C\'uth for asking us about the identification of vector-valued Lipschitz-free Banach spaces with a projective tensor product spaces.
\end{remark}












Propositions \ref{metriraropuntodife} and \ref{metrirarodiam2} show that geometry of vector-valued Lipschitz-free Banach spaces does not only depend on underlying scalar Lipschitz-free space but also on the target Banach space. However, two natural questions arise.

\begin{question}\label{estaoctalibrevector}

Let $M$ be a pointed metric space and let $X$ be a non-zero Banach space.

\begin{enumerate}
\item Does Theorem \ref{teocentral} hold without assuming that the pair $(M,X^*)$ has the CPE?
\item Does $\mathcal F(M,X)$ have an octahedral norm whenever $\mathcal F(M)$ does?
\end{enumerate}

\end{question}

Bearing in mind the identification $\mathcal F(M,X)=\mathcal F(M)\widehat{\otimes}_\pi X$, above Question is related to the problem of how octahedrality is preserved by projective tensor products. However, the last one is an open problem recently posed in \cite{llr}.

Finally, we have analysed octahedrality in $\mathcal F(M,X)$ whenever $M$ is a metric space and $X$ is a Banach space. However, we did not get any result about the dual properties (i.e. diameter two properties). More strictly.

\begin{question}

Given $M$ a metric space and $X$ a non-zero Banach space.

Which assumptions do we need over $M$ and $X$ in order to ensure that $\mathcal F(M,X)$ has the slice diameter two property (respectively diameter two property, strong diameter two property)?

\end{question}

Again, not only do we get a partial answer in scalar case but also in vector valued one. Indeed, again by \cite{ikw} we know that $\mathcal F(M)$ has the strong diameter two property whenever $M$ is a metrically convex metric space. Keeping in mind that $\mathcal F(M,X)=\mathcal F(M)\widehat{\otimes}_\pi X$, next Proposition is an inmediate application of \cite[Corollary 3.6]{tensor}.

\begin{proposition}\label{d2penlibre}
Let $M$ be a metric space with a designated origin $0$ and $X$ be a Banach space. If $M$ is metrically convex and $X$ has the strong diameter two property, then $\mathcal F(M,X)$ has the strong diameter two property.

\end{proposition}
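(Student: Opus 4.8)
The plan is to deduce the statement directly from the theory of diameter two properties in projective tensor products, so the proof will essentially be a chain of identifications and citations. First I would make the identification $\mathcal F(M,X)=\mathcal F(M)\widehat{\otimes}_\pi X$ fully explicit: the isometric isomorphism $\Phi\colon Lip(M,X^*)\to L(\mathcal F(M),X^*)$ recalled just before Proposition \ref{w*-w*continuidad}, combined with the standard duality $(\mathcal F(M)\widehat{\otimes}_\pi X)^*=L(\mathcal F(M),X^*)$, shows that $Lip(M,X^*)$ is the dual of $\mathcal F(M)\widehat{\otimes}_\pi X$. Since under this correspondence the functional $\delta_{m,x}$ is carried to the elementary tensor $\delta_m\otimes x$, the closed linear span $\mathcal F(M,X)=\overline{span}\{\delta_{m,x}\}$ is isometrically the norm closure of $span\{\delta_m\otimes x\}$, i.e. $\mathcal F(M)\widehat{\otimes}_\pi X$.

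Second, I would record that $\mathcal F(M)$ has the strong diameter two property. This is precisely what metric convexity buys us: by \cite{ikw}, metric convexity of $M$ yields the Daugavet property for $Lip(M)$, hence for its predual $\mathcal F(M)$, and the Daugavet property implies the strong diameter two property (alternatively one may cite directly that $\mathcal F(M)$ has the strong diameter two property for metrically convex $M$, as already noted in Section \ref{conseopenquestions}).

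Third, with both factors in hand having the strong diameter two property --- $\mathcal F(M)$ by the previous step and $X$ by hypothesis --- I would simply invoke \cite[Corollary 3.6]{tensor}, which asserts that the projective tensor product of two Banach spaces enjoying the strong diameter two property again enjoys it. Applying this to $\mathcal F(M)\widehat{\otimes}_\pi X$ and using the identification of the first step, we conclude that $\mathcal F(M,X)$ has the strong diameter two property, which is the claim.

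The argument involves no genuine computation; the only point deserving a line of care --- and hence what I would regard as the ``main obstacle,'' though it is really routine --- is checking that the identification $\mathcal F(M,X)=\mathcal F(M)\widehat{\otimes}_\pi X$ is an isometry and that it is compatible with the exact formulation of \cite[Corollary 3.6]{tensor} (two arbitrary factors, with the strong diameter two property required of both). Once these are confirmed there is nothing further to prove.
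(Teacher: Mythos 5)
Your proposal is correct and follows exactly the paper's own argument: the identification $\mathcal F(M,X)=\mathcal F(M)\widehat{\otimes}_\pi X$, the fact from \cite{ikw} that metric convexity of $M$ gives $\mathcal F(M)$ the strong diameter two property, and then \cite[Corollary 3.6]{tensor} applied to the projective tensor product of two spaces with that property. The paper presents this as an immediate consequence in the same way, so there is nothing to add.
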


Despite above Proposition, there are metric spaces whose free-Lipschitz Banach space fails to have any diameter two property. Indeed, it is well known that $\mathcal F(M)$ has the Radon-Nikodym property whenever $M$ is a totally discrete metric sapce \cite{kal}. Related to the strong diameter two property we can even get vector valued free Lipschitz Banach spaces which fail such property. Indeed, if we consider $X$ a Banach space failing the strong diameter two property and $M$ a totally discrete metric space then $\mathcal F(M,X)=\mathcal F(M)\widehat{\otimes}_\pi X$ does not have the strong diameter two property \cite[Corollary 3.13]{tensor}.

\end{document}